\documentclass[12pt]{article}%
\usepackage{amsmath}
\usepackage{amsfonts}
\usepackage{amssymb}
\usepackage{graphicx}
\setcounter{MaxMatrixCols}{30}
\providecommand{\U}[1]{\protect\rule{.1in}{.1in}}
\newtheorem{theorem}{Theorem}

\newtheorem{example}[theorem]{Example}

\newtheorem{lemma}[theorem]{Lemma}

\newtheorem{remark}[theorem]{Remark}

\newenvironment{proof}[1][Proof]{\noindent\textbf{#1.} }{\ \rule{0.5em}{0.5em}}
\begin{document}

\title{Unitary Representations of Lattices of Free Nilpotent Lie Groups of Step-Two }
\author{Vignon Oussa\\Dept.\ of Mathematics \\Bridgewater State University\\Bridgewater, MA 02325 U.S.A.\\}
\date{June 2013}
\maketitle

\begin{abstract}
Using a theorem proved by Bekka and Driutti, we show that if $\mathfrak{f}$ is a
freely generated nilpotent Lie algebra of step-two, then almost every
irreducible representation of the corresponding Lie group restricted to some
lattice $\Gamma$ is an irreducible representation of $\Gamma$ if the dimension of the Lie algebra is odd. However, if the dimension of
the Lie algebra is even, then almost every unitary irreducible representation
of the Lie group restricted to $\Gamma$ is reducible.

\end{abstract}

\section{Introduction}
As Gabor theory continues to surge in popularity, it is increasingly drawing a lot of
attention to the representation theory of nilpotent groups. This
unusual connection is easily discovered through Fourier analysis. In fact, the
conjugation of a translation operator by the Plancherel transform defined on
$L^{2}(\mathbb{R})$ is a modulation operator. The group generated by the
continuous family of translation and modulation operators is called the
reduced Heisenberg group. Its universal covering group is a simply connected,
connected nilpotent Lie group with Lie algebra spanned by $X_{1},X_{2},X_{3}$
with non trivial Lie brackets $[X_{3},X_{2}]=X_{1}.$ In fact the Heisenberg
Lie algebra is a free nilpotent Lie algebra of step-two and two generators.
The set of infinite dimensional unitary irreducible representations of the Heisenberg
group  up to equivalence is parametrized by the punctured line $%
\mathbb{R}
^{\ast}$ (Chapter $7$, \cite{Folland}). For each element in the punctured
line, the corresponding unitary irreducible representation is a
Schr\"{o}dinger representation which plays a central role in Gabor theory. Let
$\pi_{\lambda}$ be a Schr\"{o}dinger representation. This representation acts
on $L^{2}\left(
\mathbb{R}
\right)  $ as follows
\begin{align*}
\pi_{\lambda}\left(  \exp x_{3}X_{3}\right)  F\left(  t\right)   &  =F\left(
t-x_{3}\right)  \\
\pi_{\lambda}\left(  \exp x_{2}X_{2}\right)  F\left(  t\right)   &  =e^{-2\pi
i\lambda tx_{2}}F\left(  t\right)  \\
\pi_{\lambda}\left(  \exp x_{1}X_{1}\right)  F\left(  t\right)   &  =e^{2\pi
i\lambda x_{1}}F\left(  t\right)  .
\end{align*}
Thus, the family of functions $\pi_{\lambda}\left(  \exp\left(
\mathbb{Z}
X_{2}\right)  \exp\left(
\mathbb{Z}
X_{3}\right)  \right)  F$ is a Gabor system (see \cite{Han}). Now, if
$\left\vert \lambda\right\vert \leq1,$ then there exists a function $F$ such that
the countable family of vectors $\pi_{\lambda}\left(  \exp\left(
\mathbb{Z}
X_{2}\right)  \exp\left(
\mathbb{Z}
X_{3}\right)  \right)  F$ is a complete set in $L^{2}\left(
\mathbb{R}
\right)  $ and forms what is called a Gabor wavelet system (see \cite{Han}).
In fact, the construction of Gabor wavelets is a very active area of research
(see \cite{Han} \cite{Heil}). We remark that although the restriction of
$\pi_{\lambda}$ to the discrete group $\exp\left(
\mathbb{Z}
X_{1}\right)  \exp\left(
\mathbb{Z}
X_{2}\right)  \exp\left(
\mathbb{Z}
X_{2}\right)  $ is a cyclic representation if $\left\vert \lambda\right\vert
\leq1,$ it is clearly highly reducible. In contrast to the Heisenberg Lie
algebra, let us consider the free nilpotent Lie algebra of step-two with three
generators: $X_{1},X_{2},X_{3},$ whose Lie algebra is spanned by $X_{1}%
,X_{2},X_{3},Z_{1},Z_{2},Z_{3}$ such that the only non-trivial Lie brackets
are $
\left[  X_{1},X_{2}\right]  =Z_{1},\left[  X_{1},X_{3}\right]  =Z_{2},\left[
X_{2},X_{3}\right]  =Z_{3}.$ Let $\mu$ be the Lebesgue measure on the dual of $%
\mathbb{R}
$-span $\left\{  X_{1},X_{2},X_{3},Z_{1},Z_{2},Z_{3}\right\}  .$ Let us
consider an arbitrary infinite dimensional irreducible representation
$\pi_{\lambda}$ corresponding to a linear functional
$\lambda$ via Kirillov's map (see \cite{Corwin}.) The restriction of
$\pi_{\lambda}$ to the discrete group
\[
\Gamma=\exp\left(  \mathbb{Z}Z_{1}\right)  \exp\left(   \mathbb{Z}Z_{2}\right)
\exp\left(  \mathbb{Z}Z_{3}\right)  \exp\left(  \mathbb{Z}X_{1}\right)  \exp\left(
 \mathbb{Z}X_{2}\right)  \exp\left(  \mathbb{Z}X_{3}\right)
\]
is also irreducible for $\mu$-a.e. $\lambda$ in the dual of the Lie algebra. This surprising fact is easily explained. In fact, using the orbit method (see \cite{Corwin}), and some formal calculations, it is not hard to see that almost every infinite dimensional irreducible representation of the group is parametrized by the manifold
\[
\Lambda=\left\{  \left(  \lambda_{1},\lambda_{2},\lambda_{3},\lambda
_{4}\right)  \in%
\mathbb{R}
^{4}:\lambda_{3}\neq0\right\}.
\]
For each $\left(  \lambda_{1},\lambda_{2},\lambda_{3},\lambda_{4}\right)
\in\Lambda,$ the corresponding irreducible representation is realized as acting in $L^2(\mathbb{R})$ as follows%
\begin{align*}
\pi_{\left(  \lambda_{1},\lambda_{2},\lambda_{3},\lambda_{4}\right)  }\left(
\exp x_{3}X_{3}\right)  F\left(  t\right)    & =e^{-2\pi itx_{3}\lambda_{3}%
}F\left(  t\right)  \\
\pi_{\left(  \lambda_{1},\lambda_{2},\lambda_{3},\lambda_{4}\right)  }\left(
\exp x_{2}X_{2}\right)  F\left(  t\right)    & =F\left(  t-x_{2}\right)  \\
\pi_{\left(  \lambda_{1},\lambda_{2},\lambda_{3},\lambda_{4}\right)  }\left(
\exp x_{1}X_{1}\right)  F\left(  t\right)    & =e^{2\pi ix_{1}\lambda_{4}%
}e^{2\pi itx_{1}\lambda_{1}}e^{-2\pi i\frac{x_{1}^{2}\lambda_{2}\lambda_{1}%
}{\lambda_{3}}}F\left(  t-\frac{\lambda_{2}}{\lambda_{3}}x_{1}\right)  \\
\pi_{\left(  \lambda_{1},\lambda_{2},\lambda_{3},\lambda_{4}\right)  }\left(
\exp z_{k}Z_{k}\right)  F\left(  t\right)    & =e^{2\pi iz_{k}\lambda_{k}%
}F\left(  t\right)  .
\end{align*}
\noindent
Based on the action of the irreducible representation described above, it is clear that in fact $\pi_{\lambda}|_{\Gamma}$ is irreducible a.e. Before we introduce the general case considered in this paper, we recall a theorem which is due to Bekka and Driutti \cite{Bekka}.

\begin{theorem}
\label{Bekka}Let $N$ be a nilpotent Lie group with rational structure. Let
$\Gamma$ be a lattice subgroup of $N.$ Let $\lambda\in\mathfrak{n}^{\ast}$ and
$\pi_{\lambda}$ its corresponding representation. Then the representation
$\pi_{\lambda}|_{\Gamma}$ which is the restriction of $\pi_{\lambda}$ to
$\Gamma$ is irreducible if and only if the null-space of the matrix $\left[
\lambda\left[  X_{i},X_{j}\right]  \right]  _{1\leq i,j,n}$ with respect to a
fixed Jordan H\"{o}lder basis of the Lie algebra is not contained in a proper
rational ideal of $\mathfrak{n.}$
\end{theorem}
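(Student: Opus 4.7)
The plan is to characterize the commutant of $\pi_{\lambda}(\Gamma)$ via Kirillov's orbit method and to identify the stated null-space condition as the exact obstruction to its triviality. First I would realize $\pi_{\lambda}$ as $\mathrm{Ind}_{P}^{N}\chi_{\lambda}$ for a polarizing subalgebra $\mathfrak{p}$ at $\lambda$, and note that the null space of $\bigl[\lambda[X_{i},X_{j}]\bigr]$ coincides with the radical $\mathfrak{r}_{\lambda}=\{X\in\mathfrak{n}:\lambda([X,\mathfrak{n}])=0\}$, the Lie algebra of the coadjoint stabilizer of $\lambda$. For every $X\in\mathfrak{r}_{\lambda}$ the operator $\pi_{\lambda}(\exp X)$ is the scalar $e^{2\pi i\lambda(X)}\mathrm{Id}$, so only the image of $\Gamma$ in $N/\exp\mathfrak{r}_{\lambda}$ produces non-trivial commutation relations, and the irreducibility question reduces to a density statement about this image acting on $\mathcal{H}_{\pi_{\lambda}}$.

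The argument would then split into two directions. For the ``only if'' direction I would argue contrapositively: if $\mathfrak{r}_{\lambda}\subset\mathfrak{a}$ for some proper rational ideal $\mathfrak{a}\subsetneq\mathfrak{n}$, then $\Gamma\cap\exp\mathfrak{a}$ is a lattice in the lower-dimensional nilpotent group $\exp\mathfrak{a}$, and a Fourier decomposition of $\pi_{\lambda}|_{\Gamma}$ along the central directions of $\exp\mathfrak{a}$ produces a non-scalar projection in the commutant, contradicting irreducibility. For the converse, I would assume no proper rational ideal contains $\mathfrak{r}_{\lambda}$ and prove $\pi_{\lambda}(\Gamma)''=\mathcal{B}(\mathcal{H}_{\pi_{\lambda}})$ by a Stone--von Neumann--style density argument: after choosing a Jordan--H\"older basis that puts the induced realization into Weyl-algebra form on $L^{2}(\mathbb{R}^{k})$, one shows that the lattice elements generate a maximal abelian subalgebra on each Schr\"odinger factor, because the symplectic form $B_{\lambda}$ on $\mathfrak{n}/\mathfrak{r}_{\lambda}$ admits no proper rational isotropic subspace stable under $\Gamma$.

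The main obstacle is the converse direction, in which one must pass from the purely algebraic hypothesis (no rational ideal traps $\mathfrak{r}_{\lambda}$) to an operator-algebraic density statement. The key technical difficulty is that the action of $\Gamma$ in Mal'cev coordinates involves non-linear phase factors arising from the Baker--Campbell--Hausdorff formula, as already visible in the chirp $e^{-2\pi ix_{1}^{2}\lambda_{2}\lambda_{1}/\lambda_{3}}$ appearing in the excerpt's free step-two example; one must carefully verify that these phases remain compatible with the rational structure of $\Gamma$ so that $\pi_{\lambda}(\Gamma)$ separates points of $\mathcal{H}_{\pi_{\lambda}}$. I would expect the proof to proceed by induction on the nilpotency class, quotienting out a rational central subgroup at each step, reducing to a Heisenberg-type base case where the density conclusion is a classical consequence of Weyl's commutation relations together with the hypothesis that the parameters determined by $\lambda$ avoid the rational degeneracies detected by the ideal condition.
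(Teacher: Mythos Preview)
The paper does not prove this theorem at all. Theorem~\ref{Bekka} is quoted from Bekka and Driutti \cite{Bekka} as an external tool (``we recall a theorem which is due to Bekka and Driutti''), and the paper's own contributions (Theorems~\ref{main2} and~\ref{main1}) consist entirely of verifying, for free step-two Lie algebras, whether the null-space criterion of Theorem~\ref{Bekka} is satisfied almost everywhere. So there is nothing in the paper to compare your sketch against; if you want to see the actual argument you must consult the original article \cite{Bekka}.

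As a separate comment on your outline: the identification of the null space with the radical $\mathfrak{r}_{\lambda}$ and the observation that $\pi_{\lambda}(\exp X)$ is scalar for $X\in\mathfrak{r}_{\lambda}$ are correct and are indeed the starting points of the Bekka--Driutti analysis. Your ``only if'' direction is on the right track. The converse sketch, however, is too vague to count as a proof: the phrase ``the symplectic form $B_{\lambda}$ on $\mathfrak{n}/\mathfrak{r}_{\lambda}$ admits no proper rational isotropic subspace stable under $\Gamma$'' is not what the hypothesis says (the hypothesis concerns rational \emph{ideals} of $\mathfrak{n}$ containing $\mathfrak{r}_{\lambda}$, not isotropic subspaces of the quotient), and the reduction to a ``Heisenberg-type base case'' via induction on nilpotency class would need a precise mechanism for transporting the rational-ideal hypothesis through the quotient, which you have not supplied. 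Bekka and Driutti's actual argument proceeds differently, using Howe's and Kirillov's analysis of restrictions together with the characterization in their Proposition~1.1 (the algorithm reproduced just after the theorem statement in the present paper); the rational-dimension criterion $\dim_{\mathbb{Q}}(x_{\dim[\mathfrak{n},\mathfrak{n}]+1},\ldots,x_{n})=n-\dim[\mathfrak{n},\mathfrak{n}]$ is the concrete bridge between the algebraic hypothesis and irreducibility, and it does not appear in your sketch.
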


Bekka and Driutti have also provided a rather simple algorithm to determine
whether a sub-algebra of $\mathfrak{n}$ is contained in a rational ideal of
$\mathfrak{n}$ or not (Proposition $1.1.$ \cite{Bekka}).

\begin{enumerate}
\item Let $\mathfrak{h}$ be a sub-algebra of $\mathfrak{n.}$ Fix a Jordan
H\"{o}lder basis $\{X_1,\cdots X_n\}$  for $\mathfrak{n}$ passing through $\left[  \mathfrak{n,n}%
\right]  $

\item If there exists $X\in\mathfrak{h}$ such that $
X=%
{\displaystyle\sum\limits_{i=1}^{n}}
x_{i}X_{i}$ and if
\[
\dim_{%
\mathbb{Q}
}\left(  x_{\dim\left[  \mathfrak{n,n}\right]  +1},\cdots,x_{n}\right)
=n-\dim\left[  \mathfrak{n,n}\right]
\]
then $\mathfrak{h}$ is \textbf{not} contained in a proper rational ideal of
$\mathfrak{n}$

\item Otherwise, $\mathfrak{h}$ is contained in a proper rational ideal of
$\mathfrak{n}$
\end{enumerate}

Coming back to the example of the Heisenberg group which was discussed
earlier, we observe that for each linear functional $\lambda\in\mathfrak{%
\mathbb{R}
}^{\ast}$ satisfying $\lambda\left(  X_{1}\right)  \neq0,$ the nullspace of
the matrix
\[
\left[
\begin{array}
[c]{ccc}%
0 & 0 & 0\\
0 & 0 & \lambda\left(  X_{1}\right) \\
0 & -\lambda\left(  X_{1}\right)  & 0
\end{array}
\right]
\]
with respect to the ordered basis $\left\{  X_{1},X_{2},X_{3}\right\}  $ is
the center of the Lie algebra which is a rational ideal itself. Thus according
to Theorem \cite{Bekka}, the restriction of every Schr\"{o}dinger
representation to the lattice $\exp\left(
\mathbb{Z}
X_{1}\right)  \exp\left(
\mathbb{Z}
X_{2}\right)  \exp\left(
\mathbb{Z}
X_{2}\right)  $ is always reducible.

Using the theorem and algorithm above, we are able to show the following. Let
$\mathfrak{f}_{m,2}$ be the free nilpotent Lie algebra of step-two on $m$
generators $\left(  m>1\right)  .$ Let $\Gamma$ be a lattice subgroup of
$\mathfrak{f}_{m,2}$ and let $\mu$ be the canonical Lebesgue measure on
$\mathfrak{f}_{m,2}^{\ast}.$ Then

\begin{theorem}
\label{main2} If $m$ is \textbf{odd} then for $\lambda\in\mathfrak{n}^{\ast}$,
$\pi_{\lambda}|_{\Gamma}$ is irreducible $\mu$-a.e.
\end{theorem}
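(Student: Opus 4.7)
The plan is to apply Theorem \ref{Bekka} and its accompanying algorithm directly to the free step-two Lie algebra $\mathfrak{f}_{m,2}$. Fix a Jordan–Hölder basis where the first $d=\binom{m}{2}$ elements form a basis of the central derived ideal $[\mathfrak{n},\mathfrak{n}]$ (the brackets $Z_{ij}=[X_i,X_j]$, $i<j$) and the last $m$ elements are the generators $X_1,\dots,X_m$. Since $[\mathfrak{n},\mathfrak{n}]$ is central, every bracket $[X_i,X_j]$ involving any $Z_{kl}$ vanishes, so the matrix $M(\lambda)=[\lambda[X_i,X_j]]$ is zero outside its lower-right $m\times m$ block, which equals the skew-symmetric matrix $A(\lambda)=[\lambda([X_i,X_j])]_{1\le i,j\le m}$. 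Thus its null space decomposes as $\mathfrak{h}_\lambda=[\mathfrak{n},\mathfrak{n}]\oplus\ker A(\lambda)$, and the algorithm reduces entirely to controlling the coordinates of a vector in $\ker A(\lambda)$.

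Because $m$ is odd, $A(\lambda)$ is singular for every $\lambda$, and its rank is at most $m-1$ (the largest even integer $\le m$). Off the proper algebraic subset where every $(m-1)\times(m-1)$ principal Pfaffian vanishes, $A(\lambda)$ has rank exactly $m-1$ and $\ker A(\lambda)$ is spanned by the single vector $v(\lambda)=(v_1(\lambda),\dots,v_m(\lambda))$ with $v_i(\lambda)=(-1)^i\operatorname{Pf}(A(\lambda)_{\hat\imath})$, where the hat denotes removal of row and column $i$. Each $v_i$ is a polynomial in the entries $\lambda_{jk}=\lambda([X_j,X_k])$, and together they give a polynomial map $v:\mathfrak{f}_{m,2}^\ast\to\mathbb{R}^m$ whose image on a conull set parametrizes $\ker A(\lambda)$.

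The main step is then to show that for a.e.\ $\lambda$ the numbers $v_1(\lambda),\dots,v_m(\lambda)$ are linearly independent over $\mathbb{Q}$. It suffices to establish $\mathbb{Q}$-linear independence of the \emph{polynomials} $v_i$, for then for each nonzero $(c_1,\dots,c_m)\in\mathbb{Q}^m$ the polynomial $\sum c_iv_i$ is not identically zero and thus vanishes on a set of Lebesgue measure zero, and a countable union of such sets is still null. To see the polynomial independence, for each index $i$ choose $\lambda^{(i)}$ so that $A(\lambda^{(i)})$ is block-built from a perfect matching of $\{1,\dots,m\}\setminus\{i\}$ by assigning nonzero values to those matched pairs and zero elsewhere. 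Then $\ker A(\lambda^{(i)})=\mathbb{R}e_i$, so $v_j(\lambda^{(i)})=0$ for $j\ne i$ and $v_i(\lambda^{(i)})\ne 0$. Evaluating any relation $\sum c_jv_j\equiv 0$ at each $\lambda^{(i)}$ forces $c_i=0$, as required.

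Putting the pieces together, for $\mu$-a.e.\ $\lambda$ the element $X=v_1(\lambda)X_1+\cdots+v_m(\lambda)X_m$ lies in $\mathfrak{h}_\lambda$, and its last $m$ coordinates already span an $m$-dimensional $\mathbb{Q}$-subspace of $\mathbb{R}$. The algorithm following Theorem \ref{Bekka} therefore certifies that $\mathfrak{h}_\lambda$ is not contained in any proper rational ideal of $\mathfrak{n}$, and Theorem \ref{Bekka} yields irreducibility of $\pi_\lambda|_\Gamma$. The only mildly subtle point is the test-point argument for the Pfaffian polynomials in the third paragraph; the rest is a direct unpacking of the bracket structure of $\mathfrak{f}_{m,2}$ together with the fact, crucial here, that odd-dimensional skew-symmetric matrices are always singular.
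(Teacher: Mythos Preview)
Your proof is correct and shares the paper's overall strategy---compute $\mathfrak{n}(\lambda)$ as $\mathfrak{z}$ plus a line in the span of $Z_1,\dots,Z_m$, show that the coordinates of a spanning vector are generically $\mathbb{Q}$-independent, and invoke Theorem~\ref{Bekka}---but the two implementations diverge in useful ways. The paper normalizes the last coordinate of the kernel vector to $1$ and solves for the remaining $2k$ entries as $\alpha(\lambda)=M(\lambda)^{-1}\beta(\lambda)$, needing Lemma~\ref{skew} to verify that the resulting overdetermined system is consistent; you instead take the vector of signed principal sub-Pfaffians, which lies in the kernel as a polynomial identity, so no consistency lemma is required and the coordinate functions are polynomials rather than rational functions. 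More significantly, for the decisive step of $\mathbb{Q}$-linear independence the paper proves only that each $\alpha_j$ is a non-vanishing rational function and then asserts $\dim_{\mathbb{Q}}(\alpha_1(\lambda),\dots,\alpha_{2k}(\lambda),1)=2k+1$ without further argument; your test-point construction (a perfect matching on $\{1,\dots,m\}\setminus\{i\}$ forcing $\ker A(\lambda^{(i)})=\mathbb{R}e_i$, hence $v_j(\lambda^{(i)})=0$ for $j\neq i$ and $v_i(\lambda^{(i)})\neq 0$) establishes linear independence of the Pfaffian polynomials, after which the countability of $\mathbb{Q}^m$ yields $\mathbb{Q}$-independence of the values $v_1(\lambda),\dots,v_m(\lambda)$ for $\mu$-a.e.\ $\lambda$. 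Your route is thus both more direct and fills in a step the paper leaves implicit.
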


\begin{theorem}
\label{main1} If $m$ is \textbf{even} then for $\lambda\in\mathfrak{n}^{\ast}%
$, $\pi_{\lambda}|_{\Gamma}$ is reducible $\mu$-a.e.
\end{theorem}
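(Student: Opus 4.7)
The plan is to apply Theorem \ref{Bekka} directly, reducing the question to checking that the null-space of the matrix $M_\lambda = [\lambda[X_i, X_j]]$ is contained in a proper rational ideal of $\mathfrak{f}_{m,2}$ for almost every $\lambda$. Since $\mathfrak{f}_{m,2}$ is of step two, every commutator lies in the center $\mathfrak{z} = [\mathfrak{f}_{m,2}, \mathfrak{f}_{m,2}]$, which has dimension $\binom{m}{2}$. I would fix a Jordan--H\"older basis ordered as $Z_{ij}$ for $1 \le i < j \le m$ (spanning $\mathfrak{z}$) followed by the generators $X_1, \ldots, X_m$, so that the basis passes through $\mathfrak{z}$ as required by the algorithm recalled from \cite{Bekka}.

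With respect to this basis, $M_\lambda$ has an obvious block structure: rows and columns indexed by central basis vectors vanish identically (central elements bracket to zero with everything in step two), and the only nonzero block is the $m \times m$ skew-symmetric matrix $B_\lambda = [\lambda([X_i, X_j])]_{i,j=1}^m$ in the generator--generator position. Consequently the null-space of $M_\lambda$ decomposes as $\mathfrak{z} \oplus \ker(B_\lambda)$, where the second summand is viewed inside $\mathrm{span}\{X_1, \ldots, X_m\}$.

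When $m$ is even, $B_\lambda$ is a skew-symmetric matrix of even size, and $\det(B_\lambda) = \mathrm{Pf}(B_\lambda)^2$, where the Pfaffian is a polynomial in the coordinates $\lambda(Z_{ij})$. This polynomial is not identically zero, as one sees by evaluating $\lambda$ so that $B_\lambda$ becomes the standard block-diagonal symplectic form, which has Pfaffian equal to $1$. Hence $\det(B_\lambda) \ne 0$ outside a proper algebraic subvariety of $\mathfrak{f}_{m,2}^*$, which is a $\mu$-null set. For all $\lambda$ avoiding this subvariety, $\ker(B_\lambda) = 0$, and the null-space of $M_\lambda$ equals $\mathfrak{z}$ exactly.

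It remains to observe that $\mathfrak{z}$ is a proper rational ideal of $\mathfrak{f}_{m,2}$: proper because $m \ge 2$ forces $\mathfrak{z} \ne \mathfrak{f}_{m,2}$, and rational because it is spanned by a subset of the chosen Jordan--H\"older basis with integer structure constants. Theorem \ref{Bekka} then yields reducibility of $\pi_\lambda|_\Gamma$ for $\mu$-a.e.\ $\lambda$. The only mildly non-routine step is the Pfaffian non-vanishing, but this is classical and I do not expect a real obstacle; the substance of the theorem is simply the parity dichotomy, reflecting the well-known fact that a generic skew-symmetric matrix is nonsingular if and only if its size is even.
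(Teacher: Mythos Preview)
Your proposal is correct and follows essentially the same route as the paper: both observe that $\mathbf{M}(\lambda)$ has block form with the only nonzero block being the $m\times m$ skew-symmetric matrix $B_\lambda$, argue that $\det B_\lambda$ is a nonzero polynomial so that $\ker B_\lambda=0$ on a Zariski-open set, and conclude that $\mathfrak{f}_{m,2}(\lambda)=\mathfrak{z}$ is a proper rational ideal, whence Theorem~\ref{Bekka} gives reducibility. Your invocation of the Pfaffian and the explicit symplectic evaluation is a slightly more careful justification of the non-triviality of $\det B_\lambda$ than the paper gives, but the substance is identical.
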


This paper is organized as follows. In the second section of the paper, we
introduce some basic facts about free nilpotent Lie algebras of step-two. In
the third section of the paper, we prove Theorem \ref{main2}, and in the last
section, we prove Theorem \ref{main1}.

\subsection{Preliminaries}

Let $\mathfrak{n}$ be a nilpotent Lie algebra of dimension $n$ over
$\mathbb{R}$ with corresponding Lie group $N=\exp\mathfrak{n}$. We assume that
$N$ is simply connected and connected. It is well-known that if we write
\begin{align*}
\mathfrak{n}_{1}\text{ } &  \mathfrak{=}\text{ }\left[  \mathfrak{n,n}\right]
\\
&  \vdots\\
\mathfrak{n}_{k}\text{ } &  \mathfrak{=}\text{ }\left[  \mathfrak{n,n}%
_{k-1}\right]  \\
&  \vdots
\end{align*}
then there exists $s\in%
\mathbb{N}
$ such that $\mathfrak{n}_{s}=\left\{  0\right\}  .$ Let $\mathfrak{v}$ be a
subset of $\mathfrak{n}$ and let $\lambda$ be a linear functional in
$\mathfrak{n}^{\ast}$. We define the corresponding sets $\mathfrak{v}%
^{\lambda}$ and $\mathfrak{v}\left(  \lambda\right)  $ such that
\[
\mathfrak{v}^{\lambda}=\left\{  Z\in\mathfrak{n:}\text{ }\lambda\left[
Z,X\right]  =0\text{ for every }X\in\mathfrak{v}\right\}
\]
and $\mathfrak{v}\left(  \lambda\right)  =\mathfrak{v}^{\lambda}%
\cap\mathfrak{v.}$ $\mathfrak{z}$ denotes the center of the Lie algebra of
$\mathfrak{n,}$ and the coadjoint action on the dual of $\mathfrak{n}$ is
simply the dual of the adjoint action of $N$ on $\mathfrak{n}$. Given
$X\in\mathfrak{n},\lambda\in\mathfrak{n}^{\ast}$, the coadjoint action is
defined multiplicatively as follows:
\[
\exp X\cdot\lambda\left(  Y\right)  =\lambda\left(  Ad_{\exp-X}Y\right)  .
\]
We fix a Jordan H\"{o}lder basis $\left\{  X_{i}\right\}  _{i=1}^{n}$ for
$\mathfrak{n}$. Given any linear functional $\lambda\in\mathfrak{n}^{\ast},$
we construct the following skew-symmetric matrix:
\begin{equation}
\mathbf{M}\left(  \lambda\right)  =\left[  \lambda\left[  X_{i},X_{j}\right]
\right]  _{1\leq i,j,n}.\label{Ml}%
\end{equation}
It is easy to see that $
\mathfrak{n}\left(  \lambda\right)  =\mathrm{nullspace}\left(  \mathbf{M}%
\left(  \lambda\right)  \right)$ with respect to the fixed Jordan H\"{o}lder basis, and that the
center of the Lie algebra is always contained inside the vector space
$\mathfrak{n}\left(  \lambda\right)  .$ It is also well-known that all
coadjoint orbits have a natural symplectic smooth structure, and therefore are
even-dimensional manifolds. Also, thanks to Kirillov's theory, it is
well-known that for each $\lambda\in$ $\mathfrak{n}^{\ast},$ there is a
corresponding analytic subgroup $P_{\lambda}=\exp\left(  \mathfrak{p}%
(\lambda)\right)  $ such that $\mathfrak{p}(\lambda)$ is a maximal sub-algebra
of $\mathfrak{n}$ which is self orthogonal with respect to the bilinear form
\[
\left(  X,Y\right)  \mapsto\lambda\left[  X,Y\right]  .
\]
There is also a character $\chi_{\lambda}$ of $P_{\lambda}$ such that the pair
$\left(  P_{\lambda},\chi_{\lambda}\right)  $ determines up to unitary
equivalence a unique irreducible representation of $N.$ More precisely, if
$\chi_{\lambda}\left(  \exp X\right)  =e^{2\pi i\lambda\left(  X\right)  }$
defines a character on $P_{\lambda}$ then the unitary representation of $N$
\[
\pi_{\lambda}=\mathrm{Ind}_{P_{\lambda}}^{N}\left(  \chi_{\lambda}\right)
\]
is irreducible. This construction exhausts the set of all unitary irreducible
representations of $N.$ We refer the interested reader to \cite{Corwin} which
is a standard reference for this class of Lie groups. Now, we will recall some
basic facts about nilpotent Lie groups which are also found in \cite{Corwin}

\begin{enumerate}
\item $\mathfrak{n}$ has a \textbf{rational structure} if and only if there is
an $%
\mathbb{R}
$-basis $\mathfrak{B}$ for $\mathfrak{n}$ having rational structure constants
and if $\mathfrak{n}_{\mathfrak{%
\mathbb{Q}
}}=\mathfrak{%
\mathbb{Q}
}$ -span $\left(  \mathfrak{B}\right)  $ then
\[
\mathfrak{n\cong\mathfrak{n}_{\mathfrak{%
\mathbb{Q}
}}\otimes%
\mathbb{Q}
.}%
\]

\item If $N$ has a uniform subgroup $\Gamma$ then $\mathfrak{n}$ has a
rational structure such that $\mathfrak{\mathfrak{n}_{\mathfrak{%
\mathbb{Q}
}}=%
\mathbb{Q}
}$ -span $\left(  \log\left(  \Gamma\right)  \right)  $

\item If $\mathfrak{n}$ has a rational structure then $N$ has a uniform
subgroup $\Gamma$ such that $\log\left(  \Gamma\right)  \subseteq
\mathfrak{n}_{\mathfrak{%
\mathbb{Q}
}}$
\end{enumerate}

\section{Free Nilpotent Lie Algebras of Step Two and Examples}

Let $\mathfrak{f}_{m,2}$ be the free nilpotent Lie algebra of step two on $m$
generators $\left(  m>1\right)  $. Let $Z_{1},\cdots,Z_{m}$ be the generators
of $\mathfrak{f}_{m,2}.$ Then
\[
\mathfrak{f}_{m,2}=\mathfrak{z}\oplus%
\mathbb{R}
\text{-span }\left\{  Z_{1},\cdots,Z_{m}\right\}
\]
such that $
\mathfrak{z}=\text{ }%
\mathbb{R}
\text{-span }\left\{  Z_{ik}:1\leq i\leq m\text{, }i<k\leq m\right\} .$ The Lie brackets of this Lie algebra are described as follows.
\[
\left[  Z_{i},Z_{j}\right]  =Z_{ij\text{ }}\text{for }\left(  1\leq i\leq
m\text{ and }i<j\leq m\right)
\]
It is then easy too see that $\dim\left(  \mathfrak{z}\right)  \mathfrak{=}$ $\frac{m\left(  m-1\right)  }{2}.$ Also, it is
convenient to relabel the basis elements of the Lie algebra as follows:
\[
\left\{
\begin{array}
[c]{cc}%
X_{1}=Z_{12} & X_{\frac{m\left(  m-1\right)  }{2}+1}=Z_{1}\\
X_{2}=Z_{13} & \vdots\\
\vdots & X_{n-1}=Z_{m-1}\\
X_{\frac{m\left(  m-1\right)  }{2}}=Z_{m-1m} & X_{n}=Z_{m}%
\end{array}
\right\}
\]
Clearly, $\left\{  X_{1},X_{2},\cdots,X_{n-1},X_{n}\right\}  $ is a Jordan
H\"{o}lder basis through $\mathfrak{z}=\left[
\mathfrak{f}_{m,2},\mathfrak{f}_{m,2}\right]  $ which is fixed from now on. We
remark that since $\mathfrak{f}_{m,2}$ has a rational structure, then its
admits a lattice subgroup which we will denote throughout this paper by
$\Gamma.$

\begin{example}
Let us suppose that $m=5.$ We write $\lambda_{ij}=\lambda\left[  Z_{i}%
,Z_{j}\right]  $ for $i<j$ and $\lambda\in\mathfrak{f}_{5,2}^{\ast}$. Put
\[
\Omega=\left\{  f\in\mathfrak{f}_{5,2}^{\ast}:f_{14}f_{23}-f_{13}f_{24}%
+f_{12}f_{34}\neq0\right\}  .
\]
With some formal calculations, we obtain for $\lambda\in\Omega,$ that
\[
\mathfrak{f}_{m,2}\left(  \lambda\right)  =\mathfrak{z\oplus%
\mathbb{R}
}\left(  \alpha_{1}\left(  \lambda\right)  Z_{1}+\alpha_{2}\left(
\lambda\right)  Z_{2}+\alpha_{3}\left(  \lambda\right)  Z_{3}+\alpha
_{4}\left(  \lambda\right)  Z_{4}+Z_{5}\right)
\]
where
\[
\alpha_{k}\left(  \lambda\right)  =\left\{
\begin{array}
[c]{cc}%
\dfrac{\lambda_{25}\lambda_{34}-\lambda_{24}\lambda_{35}+\lambda_{23}%
\lambda_{45}}{\lambda_{14}\lambda_{23}-\lambda_{13}\lambda_{24}+\lambda
_{12}\lambda_{34}} & \text{ if }k=1\\
\dfrac{-\lambda_{15}\lambda_{34}-\lambda_{14}\lambda_{35}+\lambda_{13}%
\lambda_{45}}{\lambda_{14}\lambda_{23}-\lambda_{13}\lambda_{24}+\lambda
_{12}\lambda_{34}} & \text{if }k=2\\
\dfrac{\lambda_{15}\lambda_{24}-\lambda_{14}\lambda_{25}+\lambda_{12}%
\lambda_{45}}{\lambda_{14}\lambda_{23}-\lambda_{13}\lambda_{24}+\lambda
_{12}\lambda_{34}} & \text{if }k=3\\
\dfrac{-\lambda_{15}\lambda_{23}+\lambda_{13}\lambda_{25}+\lambda_{12}%
\lambda_{35}}{\lambda_{14}\lambda_{23}-\lambda_{13}\lambda_{24}+\lambda
_{12}\lambda_{34}} & \text{if }k=4
\end{array}
\right.  .
\]
Clearly $
\dim_{%
\mathbb{Q}
}\left(  \alpha_{1}\left(  \lambda\right)  ,\alpha_{2}\left(  \lambda\right)
,\alpha_{3}\left(  \lambda\right)  ,\alpha_{4}\left(  \lambda\right)
,1\right)  =5$ almost everywhere. Thus, $\pi_{\lambda}|_{\Gamma}$ is almost everywhere irreducible.
\end{example}

\section{Proof of Theorem \ref{main2}}

Suppose that $m$ is odd. For any linear functional $\lambda\in\mathfrak{f}%
_{m,2}^{\ast},$ we consider the corresponding skew-symmetric matrix
\begin{equation}
\mathbf{M}\left(  \lambda\right)  =\left[
\begin{array}
[c]{cc}%
0_{\frac{m\left(  m-1\right)  }{2}\times\frac{m\left(  m-1\right)  }{2}} &
0_{\frac{m\left(  m-1\right)  }{2}\times m}\\
0_{m\times\frac{m\left(  m-1\right)  }{2}} & \left[  \lambda\left[
Z_{i},Z_{j}\right]  \right]  _{1\leq i,j,m}%
\end{array}
\right]  .\label{M}%
\end{equation}
Since $m$ is odd then $m=2k+1$ for some positive integer $k$ greater than or
equal to one. If $\left[  \lambda\left[  Z_{i},Z_{j}\right]  \right]  _{1\leq
i,j,m}^{T}$ denotes the transpose of $\left[  \lambda\left[  Z_{i}%
,Z_{j}\right]  \right]  _{1\leq i,j,m},$ it is easy to see that the null-space
of $\mathbf{M}\left(  \lambda\right)  $ is equal to
\[
\mathfrak{z}\oplus\text{ \textrm{nullspace} }\left(  \left[  \lambda\left[
Z_{i},Z_{j}\right]  \right]  _{1\leq i,j,m}\right)  =\mathfrak{z}\oplus\text{
\textrm{nullspace} }\left(  \left[  \lambda\left[  Z_{i},Z_{j}\right]
\right]  _{1\leq i,j,m}^{T}\right)
\]
since $\left[  \lambda\left[  Z_{i},Z_{j}\right]  \right]  _{1\leq i,j,m}$ is
a skew-symmetric matrix. Now, let 
\[
\alpha=\left[
\begin{array}
[c]{c}%
\alpha_{1}\\
\vdots\\
\alpha_{2k}%
\end{array}
\right]  ,\text{ and }\beta=\left[
\begin{array}
[c]{c}%
\beta_{1}\\
\vdots\\
\beta_{2k}%
\end{array}
\right]  \in%
\mathbb{R}
^{2k}.
\]
Also, let $\left\langle \cdot,\cdot\right\rangle $ be the standard inner
product defined on $%
\mathbb{R}
^{2k}.$

\begin{lemma}
\label{skew} If $M$ is a skew-symmetric matrix in $GL\left(  2k,%
\mathbb{R}
\right)  $ and if $M\alpha=\beta$ then $\left\langle \alpha,\beta\right\rangle
=0$
\end{lemma}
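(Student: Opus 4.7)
The plan is to reduce the lemma to the standard observation that any quadratic form built from a skew-symmetric matrix vanishes on every vector. First I would rewrite the standard inner product as a matrix product: $\langle \alpha, \beta \rangle = \alpha^{T}\beta$, and then substitute the hypothesis $M\alpha = \beta$ to obtain
\[
\langle \alpha, \beta \rangle = \alpha^{T} M \alpha.
\]
The right-hand side is a $1 \times 1$ matrix, i.e.\ a scalar, which is the key structural fact the argument rests on.

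Next I would exploit scalar $=$ transpose of scalar. Taking the transpose of $\alpha^{T} M \alpha$ and using skew-symmetry $M^{T} = -M$ gives
\[
\alpha^{T} M \alpha = \bigl(\alpha^{T} M \alpha\bigr)^{T} = \alpha^{T} M^{T} \alpha = -\alpha^{T} M \alpha,
\]
so $2\alpha^{T} M \alpha = 0$, whence $\langle \alpha, \beta \rangle = 0$.

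There is essentially no obstacle here: the invertibility hypothesis $M \in GL(2k,\mathbb{R})$ plays no role in the proof itself (it is a feature of the context in which the lemma will be applied, namely to the restricted pairing on $\mathbb{R}^{2k}$ coming from the bottom block of $\mathbf{M}(\lambda)$ in (\ref{M})). The only subtlety worth flagging is the passage from a vector identity to a scalar identity via the interpretation of $\alpha^{T} M \alpha$ as a $1 \times 1$ matrix, so that transposition becomes a legitimate operation on both sides.
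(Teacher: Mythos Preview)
Your proof is correct and in fact slightly cleaner than the paper's. The paper argues by inserting $MM^{-1}$ and shuttling $M$ across the inner product: from $M\alpha=\beta$ it writes
\[
\langle \alpha,\beta\rangle = \langle \alpha, MM^{-1}\beta\rangle = \langle M^{T}\alpha, M^{-1}\beta\rangle = \langle -M\alpha, M^{-1}\beta\rangle = \langle -\beta,\alpha\rangle = -\langle \alpha,\beta\rangle,
\]
using both $M^{T}=-M$ and $M^{-1}\beta=\alpha$. This route genuinely uses the hypothesis $M\in GL(2k,\mathbb{R})$. Your argument, by contrast, goes straight to the quadratic form $\alpha^{T}M\alpha$ and uses only skew-symmetry, so the invertibility assumption is indeed superfluous in your version---a point you correctly flag. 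Both arrive at $\langle\alpha,\beta\rangle=-\langle\alpha,\beta\rangle$; yours just gets there with one fewer hypothesis and without the detour through $M^{-1}$.
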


\begin{proof}
Suppose that $M\alpha=\beta.$ Then
\begin{align*}
\left.  M\alpha=\beta\Rightarrow\right.  \left\langle \alpha,\beta
\right\rangle  &  =\left\langle \alpha,MM^{-1}\beta\right\rangle \\
&  =\left\langle M^{T}\alpha,M\beta\right\rangle \\
&  =\left\langle -M\alpha,M^{-1}\beta\right\rangle \\
&  =\left\langle -\beta,\alpha\right\rangle \\
&  =-\left\langle \alpha,\beta\right\rangle .
\end{align*}
We conclude that $\left\langle \alpha,\beta\right\rangle =0.$
\end{proof}

Let us define the following vector-valued functions on $\mathfrak{f}%
_{m,2}^{\ast}$:
\[
\lambda\mapsto\alpha\left(  \lambda\right)  =\left[
\begin{array}
[c]{c}%
\alpha_{1}\left(  \lambda\right) \\
\vdots\\
\alpha_{2k}\left(  \lambda\right)
\end{array}
\right]  \text{ and }\gamma\mapsto\gamma\left(  \lambda\right)  =\left[
\begin{array}
[c]{c}%
\alpha\left(  \lambda\right) \\
1
\end{array}
\right]  .
\]

\begin{lemma}
\label{null} For almost every linear functional $\lambda,$ there exists some
$\gamma\left(  \lambda\right)  \in%
\mathbb{R}
^{2k+1}$ such that
\[
\mathrm{nullspace}\left(  \mathbf{M}\left(  \lambda\right)  \right)
=\mathfrak{z}\oplus%
\mathbb{R}
\left(  \gamma^{T}\left(  \lambda\right)  Z\right)
\]
\ where $Z^{T}=\left[
\begin{array}
[c]{ccc}%
Z_{1}, & \cdots &, Z_{2k+1}%
\end{array}
\right]  $.
\end{lemma}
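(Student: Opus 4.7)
The plan is to exploit the block structure of $\mathbf{M}(\lambda)$ displayed in (\ref{M}). The top-left block vanishes identically, so every element of $\mathfrak{z}$ already lies in $\mathrm{nullspace}(\mathbf{M}(\lambda))$, and the computation reduces to determining the nullspace of the $m \times m$ skew-symmetric block $N(\lambda) = [\lambda[Z_i, Z_j]]_{1 \le i,j \le m}$. Since $m = 2k+1$ is odd, $N(\lambda)$ is automatically singular; the goal becomes to show that, for $\mu$-a.e.\ $\lambda$, the nullspace of $N(\lambda)$ is exactly one-dimensional and spanned by a vector whose last coordinate does not vanish, so that one can normalize it to the form $(\alpha(\lambda), 1)^T$.

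To this end I would pass to the $2k \times 2k$ principal submatrix $N'(\lambda)$ obtained by deleting the last row and column of $N(\lambda)$. Its Pfaffian is a polynomial in the entries $\lambda_{ij}$ that is not identically zero --- one verifies this by exhibiting a single $\lambda$ for which $N'(\lambda)$ equals the standard block-diagonal symplectic form. Hence the set $U = \{\lambda : \det N'(\lambda) \ne 0\}$ is the complement of the zero locus of a nonzero polynomial and thus has full Lebesgue measure. On $U$, write
\[
N(\lambda) = \begin{pmatrix} N'(\lambda) & c(\lambda) \\ -c(\lambda)^T & 0 \end{pmatrix},
\]
where $c(\lambda) \in \mathbb{R}^{2k}$ denotes the first $2k$ entries of the last column of $N(\lambda)$, and define $\alpha(\lambda) = -N'(\lambda)^{-1} c(\lambda)$.

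The vector $v = (\alpha(\lambda)^T, 1)^T$ then solves $N(\lambda) v = 0$: the top $2k$ equations hold by construction, while the bottom equation is the scalar identity $c(\lambda)^T \alpha(\lambda) = 0$, which follows \emph{immediately} from Lemma \ref{skew} applied to the invertible skew-symmetric matrix $N'(\lambda)$, since $N'(\lambda)\alpha(\lambda) = -c(\lambda)$ gives $\langle \alpha(\lambda), -c(\lambda)\rangle = 0$. Combined with the rank bound $\mathrm{rank}(N(\lambda)) \ge \mathrm{rank}(N'(\lambda)) = 2k$ on $U$ (together with the parity constraint that a skew-symmetric matrix has even rank, forcing $\mathrm{rank}(N(\lambda)) = 2k$), this forces $\mathrm{nullspace}(\mathbf{M}(\lambda)) = \mathfrak{z} \oplus \mathbb{R}(\gamma^T(\lambda) Z)$ with $\gamma(\lambda) = (\alpha(\lambda)^T, 1)^T$. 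The one place I expect any friction is verifying the generic nonvanishing of $\mathrm{Pf}(N'(\lambda))$, which really amounts to producing one concrete witness; after that, the remainder is direct linear algebra driven by Lemma \ref{skew}.
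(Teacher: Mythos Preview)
Your argument is correct and follows essentially the same route as the paper: both isolate the $2k\times 2k$ principal skew block, work on the Zariski-open set where it is invertible, solve $N'(\lambda)\alpha=-c(\lambda)$, and invoke Lemma~\ref{skew} to dispatch the remaining scalar equation. Your version is slightly more explicit in two places---you supply a witness for the non-triviality of $\det N'(\lambda)$ and you spell out the rank/parity argument pinning the nullspace dimension to one---but these are refinements of the same strategy rather than a different approach.
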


\begin{proof}
We consider the equation
\begin{equation}
\left[  \lambda\left[  Z_{i},Z_{j}\right]  \right]  _{1\leq i,j,m}^{T}\left(
\gamma\right)  =0\label{El}%
\end{equation}
where
\[
\gamma=\left[
\begin{array}
[c]{c}%
\alpha\\
1
\end{array}
\right]  \in%
\mathbb{R}
^{2k+1}\text{ is unknown.}%
\]
Equation (\ref{El}) is equivalent to the following system of $m$ equations and
$m-1$ unknowns%
\begin{equation}
\left\{
\begin{array}
[c]{c}%
\alpha_{1}\lambda\left[  Z_{1},Z_{1}\right]  +\alpha_{2}\lambda\left[
Z_{2},Z_{1}\right]  +\cdots+\alpha_{2k}\lambda\left[  Z_{2k},Z_{1}\right]
+\lambda\left[  Z_{2k+1},Z_{1}\right]  =0\\
\alpha_{1}\lambda\left[  Z_{1},Z_{2}\right]  +\alpha_{2}\lambda\left[
Z_{2},Z_{2}\right]  +\cdots+\alpha_{2k}\lambda\left[  Z_{2k},Z_{2}\right]
+\lambda\left[  Z_{2k+1},Z_{2}\right]  =0\\
\vdots\\
\alpha_{1}\lambda\left[  Z_{1},Z_{2k}\right]  +\alpha_{2}\lambda\left[
Z_{2},Z_{2k}\right]  +\cdots+\alpha_{2k}\lambda\left[  Z_{2k},Z_{2k}\right]
+\lambda\left[  Z_{2k+1},Z_{2k}\right]  =0\\
\alpha_{1}\lambda\left[  Z_{1},Z_{2k+1}\right]  +\alpha_{2}\lambda\left[
Z_{2},Z_{2k+1}\right]  +\cdots+\alpha_{2k}\lambda\left[  Z_{2k},Z_{2k+1}%
\right]  +\lambda\left[  Z_{2k+1},Z_{2k+1}\right]  =0
\end{array}
\right.  .\label{eq1}%
\end{equation}
We define a matrix-valued function $\lambda\mapsto M\left(  \lambda\right)  $
on $\mathfrak{f}_{m,2}^{\ast}$ such that
\[
M\left(  \lambda\right)  =\left[  \lambda\left[  Z_{i},Z_{j}\right]  \right]
_{1\leq i,j,m}^{T}=\left[
\begin{array}
[c]{cccc}%
\lambda\left[  Z_{1},Z_{1}\right]   & \lambda\left[  Z_{2},Z_{1}\right]   &
\cdots & \lambda\left[  Z_{2k},Z_{1}\right]  \\
\lambda\left[  Z_{1},Z_{2}\right]   & \lambda\left[  Z_{2},Z_{2}\right]   &
\cdots & \lambda\left[  Z_{2k},Z_{2}\right]  \\
\vdots & \vdots & \ddots & \vdots\\
\lambda\left[  Z_{1},Z_{2k}\right]   & \lambda\left[  Z_{2},Z_{2k}\right]   &
\cdots & \lambda\left[  Z_{2k},Z_{2k}\right]
\end{array}
\right]  \text{ and }%
\]%
\[
\beta\left(  \lambda\right)  =\left[
\begin{array}
[c]{c}%
-\lambda\left[  Z_{2k+1},Z_{1}\right]  \\
-\lambda\left[  Z_{2k+1},Z_{2}\right]  \\
\vdots\\
-\lambda\left[  Z_{2k+1},Z_{2k}\right]
\end{array}
\right]  .
\]
Let $\Omega=\left\{  \lambda\in\mathfrak{f}_{m,2}^{\ast}:\det M\left(
\lambda\right)  \neq0\right\}  .$ $\Omega$ is a Zariski open subset of
$\mathfrak{f}_{m,2}^{\ast}$ since $\det M\left(  \lambda\right)  $ is a
non-trivial homogeneous polynomial defined over $\mathfrak{f}_{m,2}^{\ast}.$
Therefore $\Omega$ is a dense and open subset of $\mathfrak{f}_{m,2}^{\ast}.$
Now, let $\lambda\in\Omega.$ Although Equation (\ref{eq1}) is equivalent to
solving the system of equations
\[
\left\{
\begin{array}
[c]{c}%
M\left(  \lambda\right)  \alpha=\beta\left(  \lambda\right)  \\
\left\langle \alpha,\beta\left(  \lambda\right)  \right\rangle =0
\end{array}
\right.  ,
\]
according to Lemma \ref{skew}, if $M\left(  \lambda\right)  \alpha
=\beta\left(  \lambda\right)  $ then
\begin{equation}
\left\langle \alpha,\beta\left(  \lambda\right)  \right\rangle
=0.\label{ortho}%
\end{equation}
The point here is that, in order to find a complete solution to Equation
(\ref{eq1}), we only need to solve $M\left(  \lambda\right)  \alpha
=\beta\left(  \lambda\right)  $ for $\alpha.$ Let $\alpha$ be a solution to
the equation. Since $M\left(  \lambda\right)  $ in non-singular, then
$\alpha=\alpha\left(  \lambda\right)  =M\left(  \lambda\right)  ^{-1}%
\beta\left(  \lambda\right)  $ and it follows that $\mathrm{nullspace}\left(
\mathbf{M}\left(  \lambda\right)  \right)  =\mathfrak{z}\oplus%
\mathbb{R}
\left(  \gamma^{T}\left(  \lambda\right)  Z\right)  .$ This completes the proof.
\end{proof}

Now, let 
\[
\Omega_{1}=%
{\displaystyle\bigcap\limits_{j=1}^{2k}}
\left\{  \lambda\in\mathfrak{f}_{m,2}^{\ast}:\lambda\left[  Z_{2k+1}%
,Z_{j}\right]  \neq0\right\}  .
\]
We observe that since $\Omega_{1}$ is Zariski open in $\mathfrak{f}%
_{m,2}^{\ast},$ it is a dense subset of $\mathfrak{f}_{m,2}^{\ast}.$

\begin{lemma}
For every linear functional $\lambda\in\Omega\cap\Omega_{1},$%
\[
\mathfrak{f}_{m,2}\left(  \lambda\right)  =\mathfrak{z}\oplus\text{ }%
\mathbb{R}
\left(  \alpha_{1}\left(  \lambda\right)  Z_{1}+\alpha_{2}\left(
\lambda\right)  Z_{2}+\cdots+\alpha_{2k}\left(  \lambda\right)  Z_{2k}%
+Z_{2k+1}\right)
\]
and each function $\lambda\mapsto\alpha_{j}\left(  \lambda\right)  $ is a
non-vanishing rational function for all $1\leq j\leq2k.$
\end{lemma}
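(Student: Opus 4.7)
The first assertion is essentially immediate from Lemma~\ref{null}: since $\Omega \cap \Omega_1 \subseteq \Omega$, the explicit description of $\mathrm{nullspace}(\mathbf{M}(\lambda))$ given there specializes to the claimed description of $\mathfrak{f}_{m,2}(\lambda)$, and the coefficients appearing in it are precisely the entries of the vector $\alpha(\lambda) = M(\lambda)^{-1}\beta(\lambda)$ produced in the proof of that lemma.

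For the rationality statement, I would invoke Cramer's rule directly. Since $\lambda \in \Omega$ forces $M(\lambda)$ to be an invertible $2k \times 2k$ matrix, the identity $\alpha(\lambda) = M(\lambda)^{-1}\beta(\lambda)$ yields
\[
\alpha_j(\lambda) \;=\; \frac{\det M^{(j)}(\lambda)}{\det M(\lambda)},
\]
where $M^{(j)}(\lambda)$ denotes the matrix obtained from $M(\lambda)$ by replacing its $j$-th column with $\beta(\lambda)$. Every entry of $M(\lambda)$ and $\beta(\lambda)$ is linear in the coordinates $\lambda_{ij} = \lambda[Z_i, Z_j]$ ($i < j$) on $\mathfrak{f}_{m,2}^{\ast}$, so both numerator and denominator are polynomials in these coordinates and $\alpha_j$ is a rational function defined on the Zariski-open set $\Omega$.

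The substantive step is to show that none of these rational functions is identically zero, equivalently that $\det M^{(j)}(\lambda)$ is not the zero polynomial. For this I would exhibit a single test point $\lambda^{\star} \in \Omega \cap \Omega_1$ at which all the $\alpha_j(\lambda^{\star})$ are nonzero simultaneously. Take $\lambda^{\star}$ determined by $\lambda^{\star}_{2i-1,\,2i} = 1$ for $1 \le i \le k$, $\lambda^{\star}_{j,\,2k+1} = 1$ for $1 \le j \le 2k$, and all remaining $\lambda^{\star}_{ij}$ with $i<j$ equal to $0$. Then $M(\lambda^{\star})$ is block-diagonal with $k$ nonsingular $2 \times 2$ skew blocks, so $\det M(\lambda^{\star}) \neq 0$ and hence $\lambda^{\star} \in \Omega$; by construction $\lambda^{\star}[Z_{2k+1}, Z_j] \neq 0$ for each $j$, so $\lambda^{\star} \in \Omega_1$ as well. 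A block-by-block inversion (each block contributes one linear equation for $\alpha_{2i-1}$ and one for $\alpha_{2i}$) immediately yields $\alpha_j(\lambda^{\star}) = \pm 1$ for every $1 \le j \le 2k$. Since $\alpha_j$ is a rational function nonzero at $\lambda^{\star}$, it is non-vanishing in the required sense, completing the argument.

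There is no conceptual obstacle in this plan; the only thing to be careful about is choosing the test functional transparently enough that both open conditions ($\det M \neq 0$ and $\lambda[Z_{2k+1}, Z_j] \neq 0$) can be verified by inspection, and that the resulting $2k \times 2k$ inversion decouples into trivial $2 \times 2$ pieces whose solutions are manifestly nonzero.
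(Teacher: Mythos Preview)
Your argument is correct and takes a genuinely different route from the paper's. Both reduce the first assertion to Lemma~\ref{null}. For the non-vanishing of the $\alpha_j$, you use Cramer's rule to write $\alpha_j(\lambda)=\det M^{(j)}(\lambda)/\det M(\lambda)$ and then exhibit a single test functional $\lambda^\star$ (with $M(\lambda^\star)$ block-diagonal) at which every $\alpha_j(\lambda^\star)=\pm 1$; this shows each numerator is a nonzero polynomial, hence each $\alpha_j$ is a rational function that is not identically zero. The paper instead invokes the orthogonality $\langle\alpha(\lambda),\beta(\lambda)\rangle=0$ from Lemma~\ref{skew}, expresses $\alpha(\lambda)$ as a scalar multiple of $\theta(\lambda)\beta(\lambda)$ for some $\theta(\lambda)\in\mathrm{O}(2k,\mathbb{R})$, and reads off the coordinates under the extra assumption that $\theta(\lambda)^{-1}$ carries standard basis vectors to standard basis vectors. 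Your approach is more elementary and self-contained; it also matches the interpretation of ``non-vanishing'' that is actually needed downstream, namely not identically zero (an almost-everywhere statement is all that Theorem~\ref{main2} uses). The stronger pointwise reading---that $\alpha_j(\lambda)\neq 0$ for \emph{every} $\lambda\in\Omega\cap\Omega_1$---in fact fails already for $m=5$ (one can choose $\lambda$ in the paper's Example with $\alpha_1(\lambda)=0$), so your formulation is the appropriate one.
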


\begin{proof}
First, we notice that $\lambda\mapsto\beta\left(  \lambda\right)  $ is a
non-zero vector-valued function on $\Omega\cap\Omega_{1}.$ Let $\mathfrak{B=}%
\left\{  b_{1},\cdots,b_{2k}\right\}  $ be the canonical column basis for $%
\mathbb{R}
^{2k}.$ Then the coordinates of $\beta\left(  \lambda\right)  :\left\langle
\beta\left(  \lambda\right)  ,b_{j}\right\rangle $ are non-zero monomials in $%
\mathbb{R}
\left[  \lambda\left(  Z_{\left(  2k+1\right)  1}\right)  ,\cdots
,\lambda\left(  Z_{\left(  2k+1\right)  2k}\right)  \right]  .$ Since
$\beta\left(  \lambda\right)  ,\alpha\left(  \lambda\right)  $ are orthogonal
vectors (see (\ref{ortho})), then there exists some rotation
matrix $\theta\left(  \lambda\right)  $ in the orthogonal Lie group
$\mathrm{O}(2k,\mathbb{R})$ such that
\begin{equation}
\alpha\left(  \lambda\right)  =\frac{\left\Vert M\left(  \lambda\right)
^{-1}\beta\left(  \lambda\right)  \right\Vert }{\left\Vert \beta\left(
\lambda\right)  \right\Vert }\theta\left(  \lambda\right)  \beta\left(
\lambda\right)  . \label{rot}%
\end{equation}
If $\theta\left(  \lambda\right)  ^{-1}b_{k}=b_{l}\in\mathfrak{B}$ then
\begin{align*}
\left\langle \alpha\left(  \lambda\right)  ,b_{k}\right\rangle  &
=\left\langle \frac{\left\Vert M\left(  \lambda\right)  ^{-1}\beta\left(
\lambda\right)  \right\Vert }{\left\Vert \beta\left(  \lambda\right)
\right\Vert }\theta\left(  \lambda\right)  \beta\left(  \lambda\right)
,b_{k}\right\rangle \\
&  =\frac{\left\Vert M\left(  \lambda\right)  ^{-1}\beta\left(  \lambda
\right)  \right\Vert }{\left\Vert \beta\left(  \lambda\right)  \right\Vert
}\left\langle \beta\left(  \lambda\right)  ,b_{l}\right\rangle \\
&  \neq0\text{ for all }\lambda\in\text{ }\Omega\cap\Omega_{1}.
\end{align*}
We conclude that for any $1\leq j\leq m,$ $\alpha_{j}\left(  \lambda\right)  $
is a non-zero rational function of $\lambda.$ Moreover, for any $\lambda
\in\Omega\cap\Omega_{1},$ $
\mathfrak{f}_{m,2}\left(  \lambda\right)  =\mathrm{nullspace}\left(
\mathbf{M}\left(  \lambda\right)  \right)$ which is equal to  $\mathfrak{z}\oplus\text{ }%
\mathbb{R}
\left(  \alpha_{1}\left(  \lambda\right)  Z_{1}+\alpha_{2}\left(
\lambda\right)  Z_{2}+\cdots+\alpha_{2k}\left(  \lambda\right)  Z_{2k}%
+Z_{2k+1}\right).$
\end{proof}
\vskip 0.5cm \noindent

\begin{proof}[Proof of Theorem \ref{main2}]
Let $\lambda\in\Omega\cap\Omega_{1}$ (a Zariski open and dense subset) and
define $X\in\mathfrak{f}_{2k+1,2}\left(  \lambda\right)  $ such that
\[
X=X_{1}+\cdots+X_{\frac{m\left(  m-1\right)  }{2}}+\alpha
_{1}\left(  \lambda\right)  Z_{1}+\alpha_{2}\left(  \lambda\right)
Z_{2}+\cdots+\alpha_{2k}\left(  \lambda\right)  Z_{2k}+Z_{2k+1}.
\]
Clearly for every $\lambda\in\Omega\cap\Omega_{1},$ 
\[
\dim_{%
\mathbb{Q}
}\left(  \alpha_{1}\left(  \lambda\right)  ,\alpha_{2}\left(  \lambda\right)
,\cdots,\alpha_{2k}\left(  \lambda\right)  ,1\right)  =2k+1.
\]
So, the vector space $\mathfrak{f}_{m,2}\left(  \lambda\right)  $ is not
contained in a proper rational ideal of $\mathfrak{f}_{m,2}$ for all
$\lambda\in\Omega\cap\Omega_{1}.$ Appealing to Theorem \ref{Bekka}, the
corresponding irreducible representation $\pi_{\lambda}$ restricted to
$\Gamma$ is an \textbf{irreducible representation} of $\Gamma$ for all
$\lambda\in\Omega\cap\Omega_{1}.$
\end{proof}

\begin{remark}
If $\lambda\in\Omega\cap\Lambda$ such that
\[
\Lambda=%
{\displaystyle\bigcap\limits_{j=1}^{2k}}
\left\{  \lambda\in\mathfrak{f}_{m,2}^{\ast}:\left[  Z_{2k+1},Z_{j}\right]
\in\ker\lambda\right\}
\]
then $\lambda\mapsto\beta\left(  \lambda\right)  $ is the zero function
defined on $\Omega\cap\Lambda$ and clearly,
\[
\mathfrak{f}_{m,2}\left(  \lambda\right)  =\mathrm{nullspace}\left(
\mathbf{M}\left(  \lambda\right)  \right)  =\mathfrak{z}\oplus\text{ }%
\mathbb{R}
\left(  Z_{2k+1}\right)
\]
which is contained in a rational ideal of $\mathfrak{f}_{m,2}.$ Therefore,
each unitary irreducible representation $\pi_{\lambda}$ of $\exp\left(
\mathfrak{f}_{m,2}\right)  $ restricted to $\Gamma$ is a reducible
representation of $\Gamma$ for all $\lambda\in\Omega\cap\Lambda$ (meager
subset of $\mathfrak{f}_{m,2}^{\ast}$).
\end{remark}

\section{Proof of Theorem \ref{main1}}

Suppose that $m$ is even. This case is much easier than the odd case. The proof
will be very short. Since $m$ is even, then $m=2k$ for some positive integer
$k$ greater than or equal to one. Thus the null-space of $\mathbf{M}\left(
\lambda\right)  $ is equal to
\[
\mathfrak{z}\oplus\text{ }\mathrm{nullspace}\text{ }\left(  \left[
\lambda\left[  Z_{i},Z_{j}\right]  \right]  _{1\leq i,j,m}^{T}\right)  .
\]
Let $\lambda$ be an element of $\mathfrak{f}_{m,2}^{\ast}.$ We consider the
matrix equation
\[
\overset{%
\begin{array}
[c]{c}%
M\left(  \lambda\right)  =\left[  \lambda\left[  Z_{i},Z_{j}\right]  \right]
_{1\leq i,j,m}^{T}\\
\shortparallel
\end{array}
}{\overbrace{\left[
\begin{array}
[c]{cccc}%
\lambda\left[  Z_{1},Z_{1}\right]   & \lambda\left[  Z_{2},Z_{1}\right]   &
\cdots & \lambda\left[  Z_{2k},Z_{1}\right]  \\
\lambda\left[  Z_{1},Z_{2}\right]   & \lambda\left[  Z_{2},Z_{2}\right]   &
\cdots & \lambda\left[  Z_{2k},Z_{2}\right]  \\
\vdots & \vdots & \ddots & \vdots\\
\lambda\left[  Z_{1},Z_{2k}\right]   & \lambda\left[  Z_{2},Z_{2k}\right]   &
\cdots & \lambda\left[  Z_{2k},Z_{2k}\right]
\end{array}
\right]  }}\overset{%
\begin{array}
[c]{c}%
\alpha\in%
\mathbb{R}
^{2k}\\
\shortparallel
\end{array}
}{\overbrace{\left[
\begin{array}
[c]{c}%
\alpha_{1}\\
\alpha_{2}\\
\vdots\\
\alpha_{2k}%
\end{array}
\right]  }}=\left[
\begin{array}
[c]{c}%
0\\
0\\
\vdots\\
0
\end{array}
\right]  
\]
which we want to solve for $\alpha.$ Put $\Omega=\left\{  \lambda\in\mathfrak{f}_{m,2}^{\ast}:\det M\left(
\lambda\right)  \neq0\right\}.$ For every $\lambda\in M\left(
\lambda\right)  ,$ we observe that $M\left(  \lambda\right)  $ is a
skew-symmetric matrix of even rank. Therefore $\Omega$ is a Zariski open
subset of $\mathfrak{f}_{n,2}^{\ast}$ and is dense and open in $\mathfrak{f}%
_{n,2}^{\ast}.$ Moreover, solving the given matrix equation above, we obtain that $\alpha=0.$ As a result, for any $\lambda\in\Omega,\mathfrak{f}%
_{m,2}\left(  \lambda\right)  =\mathfrak{z}.$ So, in summary, if the dimension
of the Lie algebra $\mathfrak{f}_{m,2}$ is even then $\mathfrak{f}_{m,2}\left(  \lambda
\right)  $ is contained inside a rational ideal of $\mathfrak{f}_{m,2}$ almost
everywhere. As a result, almost every irreducible representation $\pi
_{\lambda}$ of $\mathfrak{f}_{m,2}$ restricted to $\Gamma$ is a reducible
representation of $\Gamma.$

\end{document}